\newcommand{\Z}{\mathbb Z}
\newtheorem*{thm}{Theorem}
\begin{document}

\title{Free by Cyclic groups are large}
\author{J.\,O.\,Button}
\maketitle
\begin{abstract}
If $F$ is a free group of finite rank at least 2 then any
group of the form $F$ by $\Z$ is large. In this short note
we show how the statement follows by combining a very recent
theorem of Hagen and Wise (using work of Agol and of Wise)
with earlier results of the author.
\end{abstract}

A finitely generated group is said to be large if it possesses
a finite index subgroup surjecting to a non abelian free group.
Groups of deficiency at least two were shown to be large in
\cite{bp} whereas $\Z$, $\Z\times\Z$ and the Klein bottle group
are groups of deficiency one which are not large.

Nevertheless there is both theoretical and experimental evidence
that most groups of deficiency one are large. An important case is
when we take a free group $F_n$ of rank $n\geq 2$ and an
automorphism $\alpha$ to form the free by cyclic group
$G=F_n\rtimes_\alpha\Z$ (for $n=0$ or 1 we obtain the three groups
above). In \cite{me} Theorem 5.1 we proved that if $G$ contains
$\Z\times\Z$ then $G$ is large. Now by \cite{bf}, \cite{bfc} and
\cite{bri}, if $G$ does not contain $\Z\times\Z$ then it is word
hyperbolic. The recent and extremely powerful work of Agol and
of Wise showing that a word hyperbolic group acting properly and
cocompactly on a CAT(0) cube complex is virtually special (has
a finite index subgroup embeddable in a right angled Artin group)
implies that the group (if non elementary) is large. This allowed
all fundamental groups of closed hyperbolic 3-manifolds to be proved
large.

Hence it is natural to try and show that if $G=F_n\rtimes_\alpha\Z$
is word hyperbolic for $n\geq 2$ then it acts properly and
cocompactly on a CAT(0) cube complex, which would then combine with
\cite{me} to prove largeness for all $G$. In \cite{hgw} this result
is obtained when $G$ is word hyperbolic and $\alpha$ is irreducible,
where $\alpha$ is reducible if we can write $F_n=A*B$ with $A$ and
$B$ proper free factors such that $\alpha^k(A)$ is a conjugate of
$A$ for some $k>0$. Consequently it would seem that we need to await
the extension of this result to word hyperbolic groups
$G=F_n\rtimes_\alpha\Z$ where $\alpha$ is reducible in order to
establish largeness for all free by cyclic groups. However another
theorem by the author allows this case to be covered.

\begin{thm}
If $G=F_n\rtimes_\alpha\Z$ for $F_n$ free of finite rank $n$ at least
2 then $G$ is large.
\end{thm}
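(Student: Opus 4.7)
The plan is to prove the theorem by a short two-level case analysis that reduces the statement to the three ingredients already assembled in the introduction.

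First I would split on whether $G$ contains a $\Z\times\Z$ subgroup. If it does, then Theorem 5.1 of \cite{me} directly yields largeness and we are done. Otherwise $G$ has no $\Z\times\Z$ subgroup, so by \cite{bf,bfc,bri} the group $G$ is word hyperbolic; it is non-elementary because it contains the free group $F_n$ of rank $n\geq 2$. A second dichotomy is now on the reducibility of $\alpha$: if $\alpha$ is irreducible, then the Hagen--Wise theorem \cite{hgw} provides a proper cocompact action of $G$ on a CAT(0) cube complex, and the Agol--Wise machinery (virtual specialness of hyperbolic cube-complex groups, with the consequent embedding of a finite-index subgroup into a right-angled Artin group) then delivers largeness of $G$ exactly as it did for closed hyperbolic 3-manifold groups.

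The remaining case, which I expect to be the main obstacle, is that $G$ is word hyperbolic while $\alpha$ is reducible, since \cite{hgw} is unavailable there. Here the strategy is to invoke the further theorem of the author alluded to in the introduction, which is designed to handle reducibility directly without any cubulation. Concretely, since $\alpha^{k}(A)$ is conjugate to the proper free factor $A$ for some $k\geq 1$, one passes to the finite index subgroup $F_n\rtimes_{\alpha^k}\Z$ of $G$ and adjusts $\alpha^k$ by an inner automorphism to obtain an automorphism $\beta$ with $\beta(A)=A$ on the nose; the free product splitting $F_n=A*B$ together with this genuine $\beta$-invariance of $A$ is exactly the input that the other theorem of the author is set up to consume, and it outputs largeness of this finite index subgroup, hence of $G$ itself. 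Combining the three cases then completes the proof, and the content of the argument really lies in this reducible hyperbolic case, since without an independent result one would otherwise have to wait for an extension of \cite{hgw} to reducible automorphisms.
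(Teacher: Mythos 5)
Your case division (contains $\Z\times\Z$; hyperbolic and irreducible; hyperbolic and reducible) matches the paper's, and the first two cases are handled exactly as the paper handles them. The gap is in the reducible case. The ``further theorem of the author'' you invoke is Theorem 2.1 of \cite{me2}, and it is not an unconditional statement that consumes a splitting $F_n=A*B$ with $\alpha(A)=A$ and outputs largeness. It is conditional: writing $G_r=A\rtimes_\alpha\Z$ (restricting $\alpha$ to $A$) and $G_q=B\rtimes_{\overline{\alpha}}\Z$ (where $\overline{\alpha}=\pi\alpha$ for $\pi:A*B\rightarrow B$ the natural projection), the theorem yields largeness of $G$ only under the hypothesis that $G_r$ and $G_q$ each have a finite index subgroup with first Betti number at least 2. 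Indeed, as literally stated in \cite{me2} the hypothesis is that \emph{all} free by cyclic groups have virtual first Betti number at least 2 --- essentially a consequence of the very theorem being proved --- and only the remark following that proof weakens it to the two groups $G_r$, $G_q$. Your proposal supplies no mechanism for verifying this hypothesis, so the third case is not closed.

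The paper discharges the hypothesis by running the entire argument as an induction on the rank $n$: after arranging $\alpha(A)=A$ exactly as you do, the groups $G_r$ and $G_q$ are free by cyclic groups on free groups of rank strictly between $0$ and $n$, so either the inductive hypothesis makes them large (hence of virtual first Betti number at least 2), or they are of the form $F_1\rtimes\Z$, which is not large but contains $\Z\times\Z$ with index 1 or 2 and so still has the required Betti number property. The base case $n=2$ is done directly (any automorphism of $F_2$ sends $aba^{-1}b^{-1}$ to a conjugate of itself or its inverse, so $G$ contains $\Z\times\Z$). Without this inductive scaffolding --- or some independent argument that $G_r$ and $G_q$ have virtual first Betti number at least 2 --- the reducible case does not go through, and that case is where the actual content of the proof lies.
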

\begin{proof}
We proceed by induction on $n$. The statement is known to hold for
$n=2$ by various means (for instance if $F_2$ is free on $a,b$ then
any automorphism sends $aba^{-1}b^{-1}$ to a conjugate of itself or
its inverse so $G$ contains $\Z\times\Z$). Suppose now any group
of the form $F_k\rtimes_\alpha\Z$ is known to be large when
$2\leq k\leq n-1$. If $G=F_n\rtimes_\alpha\Z$ is not word hyperbolic
or $\alpha$ is irreducible then we are done, so say $G=A*B$ for
$A,B$ proper free factors with $\alpha^k(A)$ conjugate to $\alpha$.
Without loss of generality we can replace $\alpha^k$ by $\alpha$
(dropping to a finite index subgroup of $G$) and then change
$\alpha$ by an inner automorphism (which does not now change
$G$) to get that $\alpha(A)=A$.

In \cite{me2} Theorem 2.1 it was shown that if all free by cyclic
groups have a finite index subgroup with first Betti number at
least 2 and $\alpha$ is reducible then $G$ is large. However, as
remarked after the end of the proof, a weaker condition is actually
used. Given an automorphism $\alpha$ of $F_n$ with $\alpha(A)=A$,
where $1\leq\mbox{rank}(A),\mbox{rank}(B)<n$, we can define the
reduced free by cyclic group $G_r=A\rtimes_\alpha\Z$ by restricting
$\alpha$ to $A$, and also the quotient free by cyclic group
$G_q=B\rtimes_{\overline{\alpha}}\Z$ by setting $\overline\alpha
=\pi\alpha$ where $\pi$ is the natural homomorphism from $A*B$ to $B$
(whereupon $\overline{\alpha}$ is an automorphism of $B$). Now if
both $G_r$ and $G_q$ have finite index subgroups with first Betti
number at least 2 then \cite{me2} Theorem 2.1 shows that
$G=(A*B)\rtimes_\alpha\Z$ is large. But using our inductive
hypothesis, either $G_r$ is of the form $F_k\rtimes_\alpha\Z$ for
$2\leq k<n$ and so is large, meaning that it has a finite index
subgroup with first Betti number at least 2, or 
$G_r\cong F_1\rtimes_\alpha\Z$ which, although this group is certainly
not large, contains $\Z\times\Z$ with index 1 or 2. The same is true
for $G_q$ so the induction is complete.
\end{proof}
  
This question was originally raised in the PhD of Martin Edjvet in
1984 as well as by the author (papers passim ad nauseam) and is
Question 9.30 (4) in \cite{afw}. It also answers positively
\cite{bpr} Question 12.16 (which is also Question 9.30 (3) in \cite{afw}
and is credited to Casson) which asks whether a group of the
form $F_n\rtimes_\alpha\Z$ for $n\geq 2$ has virtual first Betti
number at least 2. (Note that a stronger version of this question,
which was (F33) in \cite{ny} and also credited to Casson, was
answered negatively in \cite{sw}.)

We might also consider strictly ascending HNN extensions $(F_n)*_\theta$
where
$\theta:F_n\rightarrow F_n$ is injective but not surjective.
Although the result of Hagen and Wise also applies to word hyperbolic
groups where $\theta$ is irreducible, we have some way to go before
establishing largeness of all strictly ascending HNN extensions
$G=(F_n)*_\theta$ for $n\geq 2$. First we do not have an equivalent
result establishing word hyperbolicity in the absence of Baumslag
Solitar subgroups. Second, although \cite{me} Corollary 4.6 proved that if
$G=(F_n)*_\theta$ contains $\Z\times\Z$ then $G$ is large, if $\theta$
is not an automorphism then $\theta$ can contain Baumslag Solitar
subgroups of the form $BS(1,m)$ for $|m|>1$. Now \cite{me}
Theorem 4.7 establishes largeness of $G$ in this case, but only under
the further condition that the virtual first Betti number of $G$ is
at least 2. We note that a group containing $BS(1,m)$ for $|m|>1$
cannot be virtually special: indeed if $F_2$ is free on $a,b$  and
$\theta(a)=a^2,\theta(b)=b^2$ then $(F_2)*_\theta$ is not linear
over any field (\cite{wf}, \cite{ds}).

\end{document}